\title{Partial Groupoid Representations and a relation with the Birget-Rhodes expansion}
\author[Lautenschlaeger and Tamusiunas]{Wesley G. Lautenschlaeger and Thaísa Tamusiunas}
\address{Instituto de Matem\'{a}tica, Universidade Federal do Rio Grande do Sul,  Av. Bento Gon\c{c}alves, 9500, 91509-900. Porto Alegre-RS, Brazil}
\email{wesleyglautenschlaeger@gmail.com}
\email{thaisa.tamusiunas@gmail.com}
\date{}
\newcounter{contador}
\numberwithin{contador}{section}
\newtheorem{theorem}[contador]{Theorem}
\newtheorem{lemma}[contador]{Lemma}
\newtheorem{corollary}[contador]{Corollary}
\theoremstyle{definition}
\newtheorem{defi}[contador]{Definition}
\newtheorem{obs}[contador]{Remark}
\newtheorem{exe}[contador]{Example}
\newcommand{\G}{\mathcal{G}}
\newcommand{\cA}{\mathcal{A}}
\begin{document}

\maketitle

\begin{abstract}
We introduce partial representation of a finite groupoid $\G$ on an algebra $\cA$ and we show that the partial representations of $\G$ are in one-to-one correspondence with the representations of the algebra generated by the Birget-Rhodes expansion $\G^{BR}$ of $\G$.
\end{abstract}

\vspace{0.5 cm}

\noindent \textbf{2020 AMS Subject Classification:} Primary: 20L05, 18B40. Secondary: 20M30.

\noindent \textbf{Keywords:} groupoid, partial representation, partial groupoid algebra, Birget-Rhodes expansion.

\section{Introduction}
Many studies concerning actions and partial actions of groupoids as been investigate in the last few years. For instance, the relation between partial and global actions, Galois theory, generalizations of classic theorems of group theory, Morita theory, crossed products and duality theorems were research topics addressed in \cite{bagio2020restriction}, \cite{bagio2022galois}, \cite{beier2023generalizations}, \cite{cortes2017characterisation}, \cite{della2023groupoid}, \cite{paques2018galois}, \cite{pedrotti2023injectivity}. The ideia of to classify something partial in terms of something global help us to understand the behavior of the partial theory.

The Birget-Rhodes expansion $\G^{BR}$ of an ordered groupoid $\G$ was constructed by Gilbert in \cite{gilbert2005actions} and it was proven that it has an ordered groupoid structure \cite[Proposition 3.1]{gilbert2005actions}. Also, there is a one-to-one correspondence between partial actions of $\G$ and actions of $\G^{BR}$, which can be viewed as a partial-to-global result achieved by enlarging the acting groupoid.

The construction of $\G^{BR}$ can be used to improve the work developed by Exel, Dokuchaev and Piccione in \cite{dokuchaev2000partial}, regarding to partial representations of a group $G$. In that work, the authors presented the partial group algebra of a group $G$, called $K_{par}(G)$, which is the algebra whose representations correspond to the partial representations of $G$. The algebra $K_{par}(G)$ was shown to be a groupoid algebra $K\Gamma(G)$, where $\Gamma(G)$ is a determined finite groupoid associated to $G$ \cite[Corollary 2.7]{dokuchaev2000partial}. But the groupoid $\Gamma(G)$ has a very rich structure. It is, in fact, the Birget-Rhodes expansion of the group $G$. What is behind this is the Ehresmann-Schein-Nambooripad Theorem (or simply, “ESN Theorem”) \cite[Theorem 4.1.8]{lawson1998inverse}, which proves that the category of the inverse semigroups is isomorphic to the category of inductive groupoids. Then, althought the Birget-Rhodes expansion $G^{BR}$ of $G$ is an inverse semigroup, it also has a structure of a groupoid, via the ESN Theorem.

Based on this, our major purpose in this paper is to complement and to extend the work of Exel, Dokuchaev and Piccione, establishing a relation between the partial representations of a groupoid $\G$ and representations of the algebra $K\G^{BR}$ generated by its Birget-Rhodes expansion $\G^{BR}$.  Indeed, we shall prove that there is a one-to-one correspondence between the partial representations of $\G$ and the representations of $K\G^{BR}$. This agrees with the idea of to enlarge the groupoid $\G$ to characterize partial representations in terms of \,``global" representations.

The paper is organized as follows. We start by fixing some terminology concerning groupoids and hence we introduce the concept of partial representation of a finite groupoid $\G$ on an algebra. Next we present the algebra $K_{par}(\G)$, which the representations are in one-to-one correspondence with the partial representations of $\G$. The last section is destined to prove that the algebra $K_{par}(\G)$ is the algebra generated by the Birget-Rhodes expansion of $\G$.

% In \cite{exel1998partial} Exel presented an universal inverse semigroup $S(G)$ associated to the group $G$, whose actions are shown to be in one-to-one correspondence with the partial actions of $G$. This inverse semigroup $S(G)$ was further proven by Kellendonk and Lawson in \cite{kellaw} to be the Birget-Rhodes expansion $G^{BR}$ of $G$.

% This structures have something in common: the groupoid algebra  $K_{par}(G)$ is the algebra generated by the Birget-Rhodes expansion of $G$. What is behind this is the Ehresmann-Schein-Nambooripad Theorem (hereafter, “ESN Theorem”) \cite[Theorem 4.1.8]{lawson1998inverse}, which proves that the category of the inverse semigroups and prehomomorphisms is isomorphic to the category of inductive groupoids and ordered functors. When looking at the functor of this isomorphism, the image of the inverse semigroup $S(G)$ is the groupoid $\Gamma(G)$ and vice versa.

%Applying a generalization of the ESN Theorem \cite{dewolf2018ehresmann}, it can be also viewed with an inverse semigroupoid structure. For the purposes of this paper, we shall work with the groupoid structure of $\G^{BR}$.

Throughout, rings and algebras are associative and unital.

\section{Partial representations and the algebra $K_{par}(\G)$}

\subsection{Partial representations}
We recall that a \emph{groupoid} $\G$ is a small category in which every morphism is an isomorphism. We denote by $\G_0$ the set of objects of $\G$. Observe that $\text{id}:\G_0 \rightarrow \G$, given by $\text{id}(x)=\text{id}_x$, is an injective map and whence we identify $\G_0\subset \G$. Given $g \in \G$, the \emph{domain} and the \emph{range} of $g$ will be denoted by $d(g)$ and $r(g)$, respectively. Hence, $d(g) = g^{-1}g$ and $r(g) = gg^{-1}$. For all $g, h \in G$, we write $\exists gh$ whenever the product $gh$ is defined. We fix the notation $\G_2 := \{ (g,h) \in \G \times \G : \exists gh \}$.

%If $\G$ is a groupoid we can consider the opposite groupoid $\G^{op}$ with the same elements as $\G$ but with the operation $\cdot$ given by $\exists g \cdot h$ in $\G^{op}$ if and only if $\exists hg$ in $\G$ and in this case $g \cdot h = hg$.

% A map $\varphi : \G \to \cH$ between two groupoids is said to be a \emph{groupoid homomorphism} if it is a functor. If $\varphi$ is a bijective groupoid homomorphism, then we say that $\varphi$ is a \emph{groupoid isomorphism}. In this case we write $\G \simeq \cH$.

%Let $A,B$ be unital $K$-algebras, where $K$ is a field. Recall that a $K$\emph{-algebra representation} of $A$ on $B$ is a unital $K$-algebra homomorphism $\psi : A \to B$.

For the rest of the paper, let $\G$ be a finite groupoid, $K$ be a field and $\cA$ be a $K$-algebra. We start this section by defining a partial groupoid representation on $\cA$.

\begin{defi}\label{defparrep}
A \emph{partial groupoid representation} of $\G$ on $\cA$ is a map $\pi : \G \to \cA$ such that:
\begin{enumerate}
    \item[(i)] $\pi(g)\pi(h)\pi(h^{-1}) = \pi(gh)\pi(h^{-1})$, $\forall (g,h) \in \G_2$;

    \item[(ii)] $\pi(g^{-1})\pi(g)\pi(h) = \pi(g^{-1})\pi(gh)$, $\forall (g,h) \in \G_2$;

    \item[(iii)] $\pi(g)\pi(g^{-1})\pi(g) = \pi(g)$, $\forall g \in \G$ ;
    
    \item[(iv)] $\sum_{e \in \G_0} \pi(e) = 1_A$ and $\pi(e)\pi(f) = 0$ for $e,f \in \G_0$ such that $e \neq f$.
\end{enumerate}
\end{defi}

\begin{obs}\label{obs2}
    Observe that $\pi(g) = \pi(g)\pi(g^{-1})\pi(g) = \pi(g)\pi(g^{-1}g) = \pi(g)\pi(d(g))$, for all $g \in \G$. Analogously, $\pi(g) = \pi(r(g))\pi(g)$, for all $g \in \G$. So, if $(g,h) \notin \G_2$, then $\pi(g)\pi(h) = \pi(g)\pi(d(g))\pi(r(h))\pi(h) = \pi(g)0\pi(h) = 0$.
 \end{obs}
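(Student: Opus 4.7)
The statement asserts three things: (a) $\pi(g) = \pi(g)\pi(d(g))$, (b) $\pi(g) = \pi(r(g))\pi(g)$, and (c) $\pi(g)\pi(h) = 0$ whenever $(g,h) \notin \G_2$. My plan is to derive (a) and (b) by rewriting the expression $\pi(g)\pi(g^{-1})\pi(g)$ in two different ways, and then to deduce (c) by inserting the resulting idempotent factors on either side of $\pi(g)\pi(h)$ and invoking the orthogonality clause in (iv).

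For (a), I would start from axiom (iii), which says $\pi(g) = \pi(g)\pi(g^{-1})\pi(g)$, and apply axiom (ii) with the substitution $g \mapsto g^{-1}$, $h \mapsto g$. This substitution is legal because $(g^{-1},g) \in \G_2$ for every $g \in \G$ (since $g^{-1}g = d(g)$ is always defined), and it rewrites $\pi(g)\pi(g^{-1})\pi(g)$ as $\pi(g)\pi(g^{-1}g) = \pi(g)\pi(d(g))$. Identity (b) is perfectly symmetric: I would apply axiom (i) with $g \mapsto g$ and $h \mapsto g^{-1}$, which is allowed since $(g,g^{-1}) \in \G_2$, to obtain $\pi(g)\pi(g^{-1})\pi(g) = \pi(gg^{-1})\pi(g) = \pi(r(g))\pi(g)$, and compare again with (iii).

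For (c), the only new ingredient is the standard groupoid fact that $(g,h) \in \G_2$ if and only if $d(g) = r(h)$. Hence $(g,h) \notin \G_2$ forces $d(g)$ and $r(h)$ to be distinct elements of $\G_0$, whereupon axiom (iv) gives $\pi(d(g))\pi(r(h)) = 0$. Using (a) on the left factor and (b) on the right factor, the product $\pi(g)\pi(h)$ expands as $\pi(g)\pi(d(g))\pi(r(h))\pi(h)$, which vanishes.

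I do not anticipate any real obstacle here: the argument is a purely formal manipulation of the four defining axioms, and the only point that requires a moment's care is choosing the correct reparametrizations of (i) and (ii) so that their left-hand sides coincide with the expression $\pi(g)\pi(g^{-1})\pi(g)$ supplied by (iii). The rest is bookkeeping with $d$, $r$, and the orthogonal decomposition of $1_{\cA}$ over $\G_0$.
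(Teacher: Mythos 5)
Your proposal is correct and follows essentially the same route as the paper: the paper's remark is itself this calculation, applying axiom (iii) together with the reparametrized axioms (ii) and (i) to get $\pi(g)=\pi(g)\pi(d(g))$ and $\pi(g)=\pi(r(g))\pi(g)$, and then using the orthogonality in (iv) for distinct objects $d(g)\neq r(h)$ to kill $\pi(g)\pi(h)$. No gaps.
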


\begin{lemma}\label{lemma33}
Let $g \in \G$. It follows that: \begin{itemize}\item[(a)] $\varepsilon(g) = \pi(g)\pi(g^{-1})$ is an idempotent of $A$; \item[(b)] if $r(g) = r(h)$, then $\varepsilon(g)\varepsilon(h) = \varepsilon(h)\varepsilon(g)$. \end{itemize}
\end{lemma}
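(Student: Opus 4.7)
Part (a) is immediate. Right-multiplying axiom (iii), $\pi(g)\pi(g^{-1})\pi(g)=\pi(g)$, by $\pi(g^{-1})$ gives $\varepsilon(g)^{2}=\varepsilon(g)$; there is no subtlety here.

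Part (b) is the substantive statement. The plan is to transport $\pi(h)\pi(h^{-1})$ from the right of $\pi(g)\pi(g^{-1})$ to its left, through a sequence of applications of (i) and (ii). The hypothesis $r(g)=r(h)$ enters precisely because it is the condition that makes each of the pairs $(g^{-1},h)$, $(h^{-1},g)$, $(h,h^{-1}g)$ and $(g^{-1}h,h^{-1})$ composable, so that both axioms may be invoked freely on them. Concretely, I would first apply (ii), with $g$ replaced by $g^{-1}$, to collapse $\pi(g)\pi(g^{-1})\pi(h)$ into $\pi(g)\pi(g^{-1}h)$; then apply (i) \emph{in reverse} on the pair $(h,h^{-1}g)$ to expand $\pi(g)\pi(g^{-1}h)$ as $\pi(h)\pi(h^{-1}g)\pi(g^{-1}h)$; then apply (ii) on the pair $(g^{-1}h,h^{-1})$ to collapse the middle triple $\pi(h^{-1}g)\pi(g^{-1}h)\pi(h^{-1})$ to $\pi(h^{-1}g)\pi(g^{-1})$; and finally apply (ii) \emph{in reverse} on $(h^{-1},g)$ to expand $\pi(h)\pi(h^{-1}g)$ as $\pi(h)\pi(h^{-1})\pi(g)$, arriving at $\varepsilon(h)\varepsilon(g)$.

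The main obstacle is finding this route: no single axiom swaps $\varepsilon(g)$ and $\varepsilon(h)$, so the computation is forced to detour through the auxiliary elements $g^{-1}h$ and $h^{-1}g$, alternately contracting and re-expanding the word. Once the path is identified each individual step is mechanical, but the decision to apply (i) in the ``wrong'' direction in the second step, which temporarily lengthens the expression, is the non-obvious move on which the whole calculation turns.
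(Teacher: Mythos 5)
Your proof is correct: each of the four steps is a legitimate instance of axioms (i)--(ii) (together with the identities $hh^{-1}g = r(h)g = g$ and $g^{-1}hh^{-1} = g^{-1}$), and the composability of every pair you invoke is exactly what the hypothesis $r(g)=r(h)$ guarantees. The paper's argument is essentially the same computation, only packaged differently: it first isolates the sliding rule $\pi(h^{-1})\varepsilon(g)=\varepsilon(h^{-1}g)\pi(h^{-1})$ (which it reuses later) and applies it twice, whereas you unroll the whole word manipulation in a single chain through the same auxiliary elements $g^{-1}h$ and $h^{-1}g$.
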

\begin{proof}
(a) It is straightforward.
%\begin{align*}
%     \varepsilon(g)^2 & = \pi(g)\pi(g^{-1})\pi(g)\pi(g^{-1}) = \pi(g)\pi(g^{-1}) = \varepsilon(g).
% \end{align*}

(b) If $r(g) = r(h)$,
\begin{equation}\label{comut}
\begin{split}
    \pi(h^{-1})\varepsilon(g) & = \pi(h^{-1})\pi(g)\pi(g^{-1}) = \pi(h^{-1}g)\pi(g^{-1}) = \pi(h^{-1}g)\pi(g^{-1}h)\pi(h^{-1}g)\pi(g^{-1}) \\
    & = \pi(h^{-1}g)\pi(g^{-1}h)\pi(h^{-1}gg^{-1}) = \varepsilon(h^{-1}g) \pi(h^{-1}r(g)) = \varepsilon(h^{-1}g) \pi(h^{-1}r(h))\\
    & = \varepsilon(h^{-1}g)\pi(h^{-1}),
\end{split}
\end{equation}
from where it follows
\begin{align*}
    \varepsilon(h)\varepsilon(g) = \pi(h)\pi(h^{-1})\varepsilon(g) = \pi(h)\varepsilon(h^{-1}g)\pi(h^{-1}) = \varepsilon(r(h)g)\pi(h)\pi(h^{-1}) = \varepsilon(g)\varepsilon(h).
\end{align*}
\end{proof}

\subsection{The algebra $K_{par}(\G)$}

As in the case of partial representations of a group G, which can be characterized by an algebra homomorphism defined on the partial group algebra $K_{par}(G)$, in the case of partial representations of a groupoid $\G$, it is possible to construct an algebra associated to $\G$ which characterizes partial representations by algebra homomorphisms. 

\begin{defi}
 We define the partial groupoid $K$-algebra $K_{\text{par}}(\G)$ as the universal $K$-algebra with unit  $1_{K_{\text{par}}(\G)}$ generated by the set of symbols $\{[g] : g \in \G\}$ and relations:
    \begin{enumerate}
        \item[(i)] $[g^{-1}][g][h] = [g^{-1}][gh]$, $\forall (g,h) \in \G_2$;

        \item[(ii)] $[g][h][h^{-1}] = [gh][h^{-1}]$, $\forall (g,h) \in \G_2$;

        \item[(iii)] $[r(g)][g] = [g] = [g][d(g)]$, $\forall g \in \G$;
    \
        \item[(iv)] $[g][h] = 0$, $\forall (g,h) \notin \G_2$.
    \end{enumerate}
\end{defi}

Notice that $\sum_{e \in \G_0} [e] = 1_{K_{\text{par}}(\G)}$. In fact, $\left ( \sum_{e \in \G_0} [e] \right )[g] = \sum_{e \in \G_0} [e][g] = [r(g)][g] = [g]$. Similarly $[g]\left ( \sum_{e \in \G_0} [e] \right ) = [g]$. 

\begin{exe}
Let $\G = \G_1 \cup \G_2$ (disjoint union), where $\G_1 = \{g,g^{-1},r(g),d(g)\}$ and $\G_2 = \{ r(h), h \}$ with $h = h^{-1}$. Then $K_{\text{par}}(\G)$ has basis $\{[g],[g^{-1}],[r(g)],[d(g)],[g][g^{-1}],[g^{-1}][g],[h],$ $[r(h)],[h][h]\}$ as a $K$-vector space. It is easy to see that $K_{\text{par}}(\G) \simeq K_{\text{par}}(\G_1) \oplus K_{\text{par}}(\G_2)$. More generally, if $\G$ is a finite groupoid with connected components $\G_1, \ldots, \G_n$, then $K_{\text{par}}(\G) \simeq K_{\text{par}}(\G_1) \oplus \cdots \oplus K_{\text{par}}(\G_n)$.
\end{exe}

The next theorem shows that there exists a one-to-one correspondence between partial representations of $\G$ and representations of $K_{\text{par}}(\G)$.

\begin{theorem} \label{teo1}
Let $\pi : \G \to \cA$ be a partial representation of $\G$ on $\cA$. Then there exists a unique homomorphism of $K$-algebras $\phi : K_{\text{par}}(\G) \to \cA$ such that $\phi([g]) = \pi(g)$. Conversely, if $\phi : K_{\text{par}}(\G) \to \cA$ is a homomorphism of $K$-algebras, then $\pi(g) = \phi([g])$ is a partial groupoid representation of $\G$ on $\cA$.
\end{theorem}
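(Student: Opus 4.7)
The plan is to exploit the universal property of $K_{\text{par}}(\G)$: a $K$-algebra homomorphism out of $K_{\text{par}}(\G)$ is the same data as an assignment $g \mapsto a_g \in \cA$ satisfying the four defining relations (i)--(iv) of $K_{\text{par}}(\G)$, together with sending $1_{K_{\text{par}}(\G)}$ to $1_{\cA}$. So the whole theorem reduces to checking that the axioms (i)--(iv) of Definition~\ref{defparrep} are equivalent to the defining relations of $K_{\text{par}}(\G)$ under the assignment $[g] \leftrightarrow \pi(g)$. Uniqueness of $\phi$ will be automatic since $\{[g] : g \in \G\}$ generates $K_{\text{par}}(\G)$ as a $K$-algebra.

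For the forward direction, let $\pi$ be a partial representation. Axiom (ii) of Definition~\ref{defparrep} is exactly relation (i) of $K_{\text{par}}(\G)$ applied to $\pi$, and axiom (i) is exactly relation (ii); so these transfer for free. For relation (iii) of $K_{\text{par}}(\G)$, I invoke Remark~\ref{obs2}, which states $\pi(r(g))\pi(g) = \pi(g) = \pi(g)\pi(d(g))$. For relation (iv), I again invoke Remark~\ref{obs2}, which gives $\pi(g)\pi(h) = 0$ whenever $(g,h) \notin \G_2$. Finally, the unitality $\phi(1_{K_{\text{par}}(\G)}) = 1_{\cA}$ is forced by axiom (iv) of Definition~\ref{defparrep}, since $1_{K_{\text{par}}(\G)} = \sum_{e \in \G_0}[e]$ as noted just after the definition of $K_{\text{par}}(\G)$. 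By the universal property, this produces the desired (unique) $\phi$ with $\phi([g]) = \pi(g)$.

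For the converse direction, let $\phi : K_{\text{par}}(\G) \to \cA$ be a $K$-algebra homomorphism and set $\pi(g) = \phi([g])$. Axioms (i) and (ii) of Definition~\ref{defparrep} follow immediately by applying $\phi$ to relations (ii) and (i) of $K_{\text{par}}(\G)$. For axiom (iii), I apply relation (ii) of $K_{\text{par}}(\G)$ with $h = g^{-1}$ (noting $(g,g^{-1}) \in \G_2$): then $[g][g^{-1}][g] = [gg^{-1}][g] = [r(g)][g] = [g]$ by relation (iii), so $\pi(g)\pi(g^{-1})\pi(g) = \pi(g)$. For axiom (iv), the identity $\sum_{e}\pi(e) = 1_{\cA}$ follows from $\phi(1) = 1_{\cA}$ together with $\sum_{e}[e] = 1_{K_{\text{par}}(\G)}$, and the orthogonality $\pi(e)\pi(f) = 0$ for distinct $e,f \in \G_0$ follows from relation (iv) of $K_{\text{par}}(\G)$ since $(e,f) \notin \G_2$ when $e \neq f$.

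I do not expect a real obstacle: the defining relations of $K_{\text{par}}(\G)$ were plainly engineered to match Definition~\ref{defparrep}, and the only small subtlety is that two of the consequences on the $K_{\text{par}}(\G)$ side (relations (iii) and (iv)) are not literally axioms of a partial representation but are precisely the content of Remark~\ref{obs2}. Once this is pointed out, both directions amount to a direct translation between the two lists.
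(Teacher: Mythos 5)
Your proposal is correct and is essentially the paper's own argument: both directions reduce to translating between the axioms of Definition~\ref{defparrep} and the defining relations of $K_{\text{par}}(\G)$, with Remark~\ref{obs2} supplying relations (iii) and (iv) on the $K_{\text{par}}(\G)$ side and the identity $\sum_{e\in\G_0}[e]=1_{K_{\text{par}}(\G)}$ handling unitality. If anything, your explicit appeal to the universal property makes the well-definedness of $\phi$ (which the paper's formula for $\phi$ on linear combinations of products leaves implicit) slightly more transparent.
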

\begin{proof}
Let $\pi : \G \to A$ be a partial representation of $\G$ on $\cA$. Define
\begin{align*}
    \phi : K_{\text{par}}(\G) & \to \cA \\
            \sum_{i=1}^m k_i \prod_{j=1}^n [g_{i,j}] & \mapsto \sum_{i=1}^m k_i\prod_{j=1}^n \pi(g_{i,j}),
\end{align*}
where $k_i \in K$ and $g_{i,j} \in \G$ for all $1 \leq i \leq m$ and $1 \leq j \leq n$. Then $\phi([g]) = \pi(g)$, for all $g \in \G$. Furthermore, if $\exists gh$, 
\begin{align*}
    \phi([g][h]) & = \phi([r(g)][g][h]) = \phi([gg^{-1}][g][h]) = \phi([g][g^{-1}][g][h]) = \phi([g][g^{-1}][gh]) \\
    & = \pi(g)\pi(g^{-1})\pi(gh) = \pi(g)\pi(g^{-1})\pi(g)\pi(h) = \pi(g)\pi(h) = \phi([g])\phi([h]),
\end{align*}
and $\phi([g][h]) = 0 = \pi(g)\pi(h)$ otherwise.

Moreover, $\phi(1_{K_{\text{par}}(\G)}) = \phi\left ( \sum_{e \in \G_0} [e] \right ) = \sum_{e \in \G_0} \pi(e) = 1_A$. Clearly, $\phi$ is unique.

Conversely, let $\phi : K_{\text{par}}(\G) \to \cA$ be a homomorphism of $K$-algebras. Define $\pi : \G \to K_{\text{par}}(\G)$ by $\pi(g) = \phi([g])$, for all $g \in \G$. We shall prove that $\pi$ is a partial groupoid representation of $\G$ on $\cA$. In fact,

(i): If $(g,h) \in \G_2$, then
\begin{align*}
    \pi(g)\pi(h)\pi(h^{-1}) & = \phi([g])\phi([h])\phi([h^{-1}]) = \phi([g][h][h^{-1}]) \\
    & = \phi([gh][h^{-1}]) = \phi([gh])\phi([h^{-1}]) = \pi(gh)\pi(h^{-1}).
\end{align*}

(ii): Analogous to (i).

(iii): $\pi(g)\pi(g^{-1})\pi(g) = \phi([g])\phi([g^{-1}])\phi([g]) = \phi([g][g^{-1}][g]) = \phi([g]) = \pi(g)$.

(iv): We have that
\begin{align*}
    \sum_{e \in \G_0} \pi(e) = \sum_{e \in \G_0} \phi([e]) = \phi \left ( \sum_{e \in \G_0} [e] \right ) = \phi(1_{K_{\text{par}}(\G)}) = 1_A
\end{align*}
and if $e,f \in \G_0$ with $e \neq f$, then $\pi(e)\pi(f) = \phi([e])\phi([f]) = \phi([e][f]) = \phi(0) = 0$.
\end{proof}

\section{The relation with the Birget-Rhodes expansion}

In this section we shall describe $K_{\text{par}}(\G)$ in terms of the Birget-Rhodes expansion of $\G$.

Define $X_g = \{h \in \G : r(h) = r(g)\}$, for all $g \in \G$. Observe that $X_g = X_{r(g)}$. Now let $S \subseteq X_e$ be a subset, for some $e \in \G_0$. We set:
\begin{align}\label{defps}
    P_S = \prod_{h \in S} \varepsilon(h) \prod_{h \in X_e \setminus S} (\pi(e) - \varepsilon(h)).
\end{align}

Using \eqref{comut} it is easy to see that $\pi(\ell)P_S = P_{\ell S}\pi(\ell)$, for all $\ell \in \G$ with $d(\ell) = e$ and $S \subseteq X_e$.

Observe that if $e \notin S$, then $P_S = 0$. Moreover, if $\ell \in X_e \setminus S$, then $P_S\pi(\ell) = 0$, because $\varepsilon(\ell)\pi(\ell) = \pi(\ell)$, so $(\pi(e) - \varepsilon(\ell))\pi(\ell) = \pi(e)\pi(\ell) - \varepsilon(\ell)\pi(\ell) = \pi(r(\ell))\pi(\ell) - \pi(\ell) = \pi(\ell) - \pi(\ell) = 0$.

Furthermore, \begin{align}\label{ps}\pi(e) = \sum_{S \subseteq X_e} P_S,\end{align} since we have the combinatorial formula
\begin{align*}
    \pi(e) & = \prod_{h \in X_e} \pi(e) = \prod_{h \in X_e} (\pi(e) - \varepsilon(h) + \varepsilon(h)) \\ & = \sum_{S \subseteq X_e} \left ( \left ( \prod_{h \in S} \varepsilon(h) \right ) \cdot \left ( \prod_{h \in X_g \setminus S} (\pi(e) - \varepsilon(h))  \right ) \right ).
\end{align*}

Now define $Y_g = \{ h \in \G : r(h) = d(g) \} = X_{g^{-1}}$. We set the finite groupoid, constructed from $\G$, $$\G^{BR} = \{ (A,g) : d(g), g^{-1} \in A \subseteq Y_g\}$$ as the groupoid with partial multiplication given by
    \begin{align*}
        (A,g) \cdot (B,h) = \begin{cases}
            (B,gh), \text{ if } (g,h) \in \G_2 \text{ and } A = hB, \\
            \text{undefined, otherwise.}
        \end{cases}
    \end{align*}

The inverse of the pair $(A,g)$ is $(gA,g^{-1})$. Also $d(A,g) = (A,d(g))$ and $r(A,g) = (gA,r(g))$. This groupoid is the Birget-Rhodes expansion of $\G$ (see \cite[Proposition 3.1]{gilbert2005actions}).

% Consider $\G^{BR}$, the Birget-Rhodes expansion of $\G$, defined in \cite{gilbert2005actions} as the groupoid
% \begin{align*}
%     \G^{BR} = \{(U,g) : g,r(g) \in U \subseteq X_g \}
% \end{align*}
% with product
% \begin{align*}
%     (U,g)(V,h) = \begin{cases}
%         (U,gh), \text{ if } \exists gh \text{ and } U = gV, \\
%         \text{undefined, otherwise.}
%     \end{cases}
% \end{align*}

% %Then we obtain the following result.

% \begin{prop} \label{propbrop}
%     The groupoids $\Gamma(\G)$ and $(\G^{BR})^{op}$ are isomorphic.
% \end{prop}
% \begin{proof}
%     Consider $\varphi : \Gamma(\G) \to \G^{BR}$ defined by $\varphi(A,g) = (A,g^{-1})$. Notice that this is well-defined, since $(A,g) \in \Gamma(\G)$ implies that $d(g),g^{-1} \in A \subseteq Y_g$. Now, $r(g^{-1}) = d(g)$, from where it follows that $r(g^{-1}),g^{-1} \in A \subseteq X_{g^{-1}}$, that is, $(A,g^{-1}) \in \G^{BR}$.

%     Now, $\exists (A,g)(B,h) \text{ in } \Gamma(\G) \Leftrightarrow (g,h) \in \G_2 \text{ and } A = Bh \Leftrightarrow (h^{-1},g^{-1}) \in \G_2 \text{ and } B = Ah^{-1} \Leftrightarrow \exists (B,h^{-1})(A,g^{-1})$ in $\G^{BR}$. In this case,
%     \begin{align*}
%         \varphi((A,g)(B,h)) = \varphi((B,gh)) = (B,h^{-1}g^{-1}) = (B,h^{-1})(A,g^{-1}) = \varphi(B,h)\varphi(A,g).
%     \end{align*}

%     Therefore if we obtain a homomorphism of groupoids $\phi : \Gamma(\G) \to \G^{BR}$. It is easy to see that $\varphi$ is a bijection, from where the result follows.
% \end{proof}

An easy calculation shows that the elements of the form $(A,e)$, $e \in \G_0$, are idempotents in the groupoid algebra $K\G^{BR}$, that is, $(A,e)^2 = (A,e)$. Also, they are mutually orthogonal and their sum is $1_{K\G^{BR}}$.

%\sum_{A \ni g^{-1}}

\begin{lemma}
Define the map $\lambda : \G \to K\G^{BR}$ by $\lambda(g) = \sum_{A \ni g^{-1} } (A,g)$. Then $\lambda$ is a partial groupoid representation of $\G$ on $K\G^{BR}$.  
\end{lemma}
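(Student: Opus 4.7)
The plan is to verify the four axioms of Definition \ref{defparrep} directly from the multiplication rule in $\G^{BR}$, namely that $(A,g)(B,h)$ equals $(B,gh)$ if $(g,h)\in\G_2$ and $A=hB$, and is zero otherwise. The main observation driving every computation is that in a product, the first factor's index set is forced to be a translate of the second factor's index set, so after the product the only free index is that of the right-hand factor. This lets us rewrite each product of $\lambda$'s as a single sum indexed by subsets, with explicit membership conditions, and then match the two sides by comparing these conditions.

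Axiom (iv) is essentially already in the excerpt: the idempotents of $\G^{BR}$ are exactly the pairs $(A,e)$ with $e\in A\subseteq X_e$, they are mutually orthogonal, and their sum is $1_{K\G^{BR}}$. Regrouping this sum according to $e\in\G_0$ gives $\sum_e \lambda(e)=1_{K\G^{BR}}$, and for $e\neq f$ the product $(A,e)(B,f)$ is undefined because $(e,f)\notin\G_2$, so $\lambda(e)\lambda(f)=0$. For axiom (iii), I would first compute $\lambda(g)\lambda(g^{-1})=\sum_B (B,r(g))$ over $B$ with $r(g),g\in B\subseteq X_g$ (this is an analogue of the idempotent $\varepsilon(g)$), and then multiply by $\lambda(g)$: the product $(B,r(g))(C,g)$ forces $B=gC$, which is automatic once $C$ satisfies $d(g),g^{-1}\in C\subseteq Y_g$, so the triple product collapses exactly to $\sum_C (C,g)=\lambda(g)$.

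For axiom (i), I would compute $\lambda(g)\lambda(h)$ assuming $(g,h)\in\G_2$: the nonzero terms of $\sum_{A,B}(A,g)(B,h)$ are those with $A=hB$, giving
\begin{align*}
\lambda(g)\lambda(h)=\sum_B (B,gh),
\end{align*}
where $B$ ranges over subsets with $d(h),h^{-1},(gh)^{-1}\in B\subseteq Y_h$. This differs from $\lambda(gh)=\sum_B(B,gh)$ only by the extra constraint $h^{-1}\in B$. Multiplying on the right by $\lambda(h^{-1})=\sum_C(C,h^{-1})$ (with $r(h),h\in C\subseteq X_h$) forces $B=h^{-1}C$; under this substitution the condition $h^{-1}\in B$ becomes $r(h)\in C$, which is automatically satisfied by every $C$ appearing in $\lambda(h^{-1})$. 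Thus the extra constraint disappears, and both $\lambda(g)\lambda(h)\lambda(h^{-1})$ and $\lambda(gh)\lambda(h^{-1})$ equal $\sum_{C:\,r(h),h,g^{-1}\in C\subseteq X_h}(C,g)$. Axiom (ii) proceeds by an entirely symmetric argument: after computing $\lambda(g^{-1})\lambda(g)=\sum_B(B,d(g))$, one multiplies on the right by $\lambda(h)$ and on the other side computes $\lambda(g^{-1})\lambda(gh)$ directly, checking that both collapse to $\sum_{C:\,d(h),h^{-1},(gh)^{-1}\in C\subseteq Y_h}(C,h)$.

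The main obstacle is purely bookkeeping: each product introduces three layers of constraints on the index sets, those for $(A,g)\in\G^{BR}$, those for the product to be defined, and those for the resulting element to be valid in $\G^{BR}$. The only conceptual step is recognising that in every case the key extra membership condition obstructing an identity $\lambda(g)\lambda(h)=\lambda(gh)$ becomes automatic once a suitable $\lambda(h^{-1})$ or $\lambda(g^{-1})$ is adjoined, which is precisely why $\lambda$ is merely partial rather than a genuine functor.
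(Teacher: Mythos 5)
Your proposal is correct and follows essentially the same route as the paper: direct verification of the four axioms by expanding products of the sums $\sum_{A\ni g^{-1}}(A,g)$ via the multiplication rule of $\G^{BR}$ and tracking the membership constraints on the index sets. The only cosmetic differences are that you verify axiom (iii) by computing the triple product directly through $\lambda(g)\lambda(g^{-1})=\sum_{B}(B,r(g))$, whereas the paper first reduces it to $\lambda(g)\lambda(d(g))=\lambda(g)$ via Remark \ref{obs2}, and that you isolate the intermediate product $\lambda(g)\lambda(h)$ and its extra constraint $h^{-1}\in B$ before adjoining the third factor, which the paper does in one pass.
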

\begin{proof} (i) Given $(g,h) \in \G_2$, we have
\begin{align*}
    \lambda(g^{-1})\lambda(g)\lambda(h) & = \sum_{\substack{A \ni g \\ C \ni g^{-1} \\ B \ni h^{-1}}} (A,g^{-1})(B,g)(C,h) = \sum_{\substack{C \ni h^{-1} \\ hC \ni g^{-1}}} (ghC,g^{-1})(hC,g)(C,h) \\
    & = \sum_{\substack{C \ni h^{-1} \\ C \ni h^{-1}g^{-1}}} (C,d(g)h) = \sum_{\substack{C \ni h^{-1} \\ C \ni h^{-1}g^{-1}}} (C,r(h)h) = \sum_{\substack{C \ni h^{-1} \\ C \ni h^{-1}g^{-1}}} (C,h).
\end{align*}

On the other hand,

\begin{align*}
    \lambda(g^{-1})\lambda(gh) & = \sum_{\substack{A \ni g \\ C \ni h^{-1}g^{-1}}} (A, g^{-1})(C,gh) = \sum_{\substack{C \ni h^{-1} \\ C \ni h^{-1}g^{-1}}} (hC, g^{-1})(C,gh) = \sum_{\substack{C \ni h^{-1} \\ C \ni h^{-1}g^{-1}}} (C,h).
\end{align*}

Hence $\lambda(g^{-1})\lambda(g)\lambda(h) = \lambda(g^{-1})\lambda(gh)$, for all $(g,h) \in \G_2$. The equality $\lambda(g)\lambda(h)\lambda(h^{-1}) = \lambda(gh)\lambda(h^{-1})$ is proved similarly.

(iii) The equality $\lambda(g)\lambda(g^{-1})\lambda(g) = \lambda(g)$ is equivalent to  $\lambda(g)\lambda(d(g)) = \lambda(g)$ by Remark \ref{obs2}. We shall show the second equality. For $g \in \G$,
\begin{align*}
    \lambda(g)\lambda(d(g)) & = \left ( \sum_{A \ni g} (A,g) \right) \left( \sum_{B \ni d(g)} (B,d(g)) \right ) = \sum_{A \ni g} (A,g)(A,d(g)) = \sum_{A \ni g}(A,g) = \lambda(g).
\end{align*}

(iv) We have $\sum_{e \in \G_0} \lambda(e) = \sum_{e \in \G_0} \sum_{A \ni e} (A,e) = 1_{K\G^{BR}}$, and if $e,f \in \G_0$, $e \neq f$, \[\lambda(e)\lambda(f) = \left (\sum_{A \ni e} (A,e) \right ) \left ( \sum_{B \ni f} (B,f) \right ) = \sum_{\substack{A \ni e \\ B \ni f}} (A,e)(B,f) = 0. \qedhere\]
\end{proof}

\begin{theorem} \label{teorep}
There is a one-to-one correspondence between the partial groupoid representations of $\G$ and the representations of $K\G^{BR}$. More precisely, if $\cA$ is any unital $K$-algebra, then $\pi : \G \to \cA$ is a partial groupoid representation of $\G$ if and only if there is an algebra homomorphism $\tilde{\pi} : K\G^{BR} \to \cA$ such that $\pi = \tilde{\pi} \circ \lambda$. Moreover, such a homomorphism $\tilde{\pi}$ is unique.
\end{theorem}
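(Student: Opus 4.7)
The plan is to construct $\tilde\pi$ explicitly on basis elements of $K\G^{BR}$ using the idempotents $P_S$ of \eqref{defps}, and to verify the required properties by exploiting three facts already recorded in the paper: \eqref{ps}, the commutation rule $\pi(\ell) P_S = P_{\ell S}\pi(\ell)$, and the annihilation $P_S \pi(\ell) = 0$ for $\ell \in X_e \setminus S$. Concretely, for $(A,g) \in \G^{BR}$ set $\tilde\pi(A,g) := \pi(g)\, P_A$, with $P_A$ formed inside $\cA$ relative to $e = d(g)$ (so $A \subseteq X_e$), and extend $K$-linearly to all of $K\G^{BR}$. The converse direction is automatic: if such a $\tilde\pi$ exists, then $\pi := \tilde\pi \circ \lambda$ is a partial representation because $\lambda$ is one and algebra homomorphisms preserve the defining identities.

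\medskip

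The first check is $\tilde\pi \circ \lambda = \pi$. Observe that $P_A = 0$ whenever $d(g) \notin A$, and that $\pi(g)\, P_A = 0$ whenever $g^{-1} \notin A$: in that case $P_A$ carries the factor $\pi(d(g)) - \varepsilon(g^{-1})$, which is annihilated on the left by $\pi(g)$ via axiom (iii). Combined with \eqref{ps}, this yields $\sum_{A \ni g^{-1}} \pi(g)\, P_A = \pi(g)\pi(d(g)) = \pi(g)$. Unitality $\tilde\pi(1_{K\G^{BR}}) = 1_\cA$ follows from \eqref{ps} and axiom (iv).

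\medskip

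The heart of the argument is multiplicativity on basis products $(A,g)(B,h)$. A preliminary fact is that $\{P_S : S \subseteq X_e\}$ is a pairwise orthogonal family of idempotents: for $S \neq T$, any witness $h_0$ lying in their symmetric difference contributes the annihilating pair $\varepsilon(h_0)(\pi(e) - \varepsilon(h_0)) = 0$. The case analysis then runs as follows. If $(g,h) \notin \G_2$, writing $P_A = P_A\,\pi(d(g))$ and $\pi(h) = \pi(r(h))\pi(h)$ inserts the factor $\pi(d(g))\pi(r(h)) = 0$. If $(g,h) \in \G_2$ with $A \neq hB$, the commutation rule gives $\pi(g)\, P_A\, P_{hB}\, \pi(h)$, which vanishes by orthogonality. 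If $(g,h) \in \G_2$ with $A = hB$, the same commutation produces $\pi(g)\pi(h)\, P_B$; since $h^{-1} \in B$ the factor $\varepsilon(h^{-1}) = \pi(h^{-1})\pi(h)$ appears in $P_B$, and axiom (i) collapses $\pi(g)\pi(h)\varepsilon(h^{-1})$ to $\pi(gh)\varepsilon(h^{-1})$, so the product equals $\pi(gh)\, P_B = \tilde\pi(B, gh)$, matching the $\G^{BR}$-multiplication.

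\medskip

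For uniqueness, the plan is to show that $K\G^{BR}$ is generated as a $K$-algebra by $\lambda(\G)$. Two computations suffice: first, $\lambda(g)\cdot (A, d(g)) = (A, g)$ in $K\G^{BR}$ for every $(A,g) \in \G^{BR}$, reducing the task to expressing the unit-component idempotents $(A, e)$; second, $\lambda(h)\lambda(h^{-1}) = \sum_{B \ni h,\, B \ni e} (B, e)$, and an inclusion--exclusion over $h \in X_e$ (formally parallel to the construction of $P_A$) recovers each $(A, e)$ as a polynomial in the elements $\lambda(h)\lambda(h^{-1})$. The main obstacle is the bookkeeping in the multiplicativity step---juggling the base point $e = d(g)$, the commutation rule, and orthogonality simultaneously---but once organized, each subcase reduces to a single application of an axiom of partial representation.
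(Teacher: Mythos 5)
Your proposal is correct, and its core coincides with the paper's proof: the formula $\tilde{\pi}(A,g)=\pi(g)P_A$ with $P_A$ taken at $e=d(g)$ is exactly the paper's definition, and your three-way case split for multiplicativity (non-composable, $A\neq hB$, $A=hB$) is the same argument, merely organized through the orthogonal idempotent family $\{P_S\}$ and the commutation rule $\pi(\ell)P_S=P_{\ell S}\pi(\ell)$ rather than by expanding the $\varepsilon$-factors one at a time; your verification of $\tilde{\pi}\circ\lambda=\pi$ (kill the terms with $g^{-1}\notin A$ since $\pi(g)\bigl(\pi(d(g))-\varepsilon(g^{-1})\bigr)=0$, then apply \eqref{ps}) is in fact slightly more direct than the paper's add-and-subtract of $\varepsilon(g^{-1})$. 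Where you genuinely diverge is the uniqueness step, i.e.\ showing that $\lambda(\G)$ generates $K\G^{BR}$. The paper factors $h=g_1\cdots g_k$ so that the partial products $g_1\cdots g_i$ enumerate the elements of $B$, computes $\lambda(g_1)\cdots\lambda(g_k)=\sum_{A\supseteq B}(A,h)$, and finishes with an inclusion--exclusion over supersets of $B$. You instead first reduce to the unit components via $(A,g)=\lambda(g)\cdot(A,d(g))$, and then recover each $(A,e)$ as $\prod_{h\in A}Q_h\cdot\prod_{h\in X_e\setminus A}(\lambda(e)-Q_h)$ where $Q_h=\lambda(h)\lambda(h^{-1})=\sum_{B\ni h,\,e}(B,e)$ are commuting idempotents of $K\G^{BR}$ --- an inclusion--exclusion that exactly mirrors the definition \eqref{defps} of $P_A$. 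Both arguments are valid; yours avoids the factorization bookkeeping (choosing and ordering the $g_i$'s) and makes the structural parallel with the $P_S$'s explicit, while the paper's produces an arbitrary basis element $(B,h)$ in a single product without first isolating the idempotents. The only points you leave implicit --- that the factors of $P_S$ commute (Lemma \ref{lemma33}(b)), so the annihilating pair in the orthogonality and in the $g^{-1}\notin A$ argument can be brought adjacent, and that $hB\subseteq X_{d(g)}$ so $P_A$ and $P_{hB}$ live over the same base point --- are exactly the facts the paper also uses, so there is no gap.
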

\begin{proof}
    If $\tilde{\pi} : K\G^{BR} \to \cA$ is a homomorphism of $K$-algebras, then clearly $\pi = \tilde{\pi} \circ \lambda : \G \to \cA$ is a partial groupoid representation of $\G$ on $\cA$.

    Conversely, assume that $\pi : \G \to \cA$ is a partial groupoid representation of $\G$. For all $g \in \G$, denote by $\varepsilon(g) = \pi(g)\pi(g^{-1}) \in \cA$. Recall from Lemma \ref{lemma33} and Remark \ref{obs2} that $\varepsilon(g)\varepsilon(h) = \delta_{r(g),r(h)}\varepsilon(h)\varepsilon(g)$. Also, from \eqref{comut}, if $d(g)=r(h)$ then $\pi(g)\varepsilon(h) = \varepsilon(gh)\pi(g)$, and if $d(g) \neq r(h)$, then $\pi(g)\varepsilon(h) = 0$. Similarly, if $r(g) = r(h)$, then $\varepsilon(h)\pi(g) = \pi(g)\varepsilon(g^{-1}h)$, and $\varepsilon(h)\pi(g) = 0$ otherwise. 

    For $(A,g) \in \G^{BR}$, we define
    \begin{align*}
        \tilde{\pi}(A,g) = \pi(g) \left ( \prod_{h \in A} \varepsilon(h) \right ) \left ( \prod_{h \in Y_g \setminus A} (\pi(d(g)) - \varepsilon(h)) \right ).
    \end{align*}

    For $(A,g), (B,h) \in \G^{BR}$, we have
    \begin{align*}
        \tilde{\pi}(A,g)\tilde{\pi}(B,h) & = \pi(g) \cdot \prod_{k \in A} \varepsilon(k) \cdot \prod_{k \in Y_g \setminus A} (\pi(d(g)) - \varepsilon(k)) \cdot \pi(h)\\ & \cdot \prod_{\ell \in B} \varepsilon(\ell) \cdot \prod_{\ell \in Y_h \setminus B} (\pi(d(h)) - \varepsilon(\ell)).
    \end{align*}

    If $d(g) \neq r(h)$, then $\pi(d(g))\pi(h) = 0$ and $\varepsilon(k)\pi(h) = 0$, for all $k \in Y_g \setminus A$, since $\varepsilon(k) = \pi(k)\pi(k^{-1})$ and $d(k^{-1}) = r(k) = d(g) \neq r(h)$. So in this case $\tilde{\pi}(A,g)\tilde{\pi}(B,h) = 0$.

    Suppose now that $d(g) = r(h)$. Hence
    \begin{align*}
        & \tilde{\pi}(A,g)\tilde{\pi}(B,h) \\
        & = \pi(g)\pi(h) \cdot \prod_{k \in A} \varepsilon(h^{-1}k) \cdot \prod_{k \in Y_g \setminus A} (\pi(d(h)) - \varepsilon(h^{-1}k)) \cdot \prod_{\ell \in B} \varepsilon(\ell) \cdot \prod_{\ell \in Y_h \setminus B} (\pi(d(h)) - \varepsilon(\ell)) \\
        & = \pi(g)\pi(h) \cdot \prod_{k \in h^{-1}A} \varepsilon(k) \cdot \prod_{k \in Y_h \setminus h^{-1}A} (\pi(d(h)) - \varepsilon(k)) \cdot \prod_{\ell \in B} \varepsilon(\ell) \cdot \prod_{\ell \in Y_h \setminus B} (\pi(d(h)) - \varepsilon(\ell)).
    \end{align*}
    %where ($*$) follows from the observations in the beginning of the proof and from the fact that $\pi(d(g))\pi(h) = \pi(r(h))\pi(h) = \pi(h) = \pi(h)\pi(d(h))$.

    If $h^{-1}A \neq B$, that is, if $A \neq hB$, then either there is $k \in h^{-1}A$ such that $k \in Y_h \setminus B$ or there is $k \in B$ such that $k \in Y_h \setminus h^{-1}A$. In both cases, the factor $\varepsilon(k)(\pi(d(h)) - \varepsilon(k)) = 0$ appears in the expression of $\tilde{\pi}(A,g)\tilde{\pi}(B,h)$, from where it follows that $\tilde{\pi}(A,g)\tilde{\pi}(B,h) = 0$.

    On the other hand, if $h^{-1}A = B$, then
    \begin{align*}
        \tilde{\pi}(A,g)\tilde{\pi}(B,h) & = \pi(g)\pi(h) \cdot \prod_{k \in h^{-1}A} \varepsilon(k) \cdot \prod_{k \in Y_h \setminus h^{-1}A} (\pi(d(h))) - \varepsilon(k) \\
        & = \pi(g)\pi(h)\varepsilon(h^{-1}) \cdot \prod_{\substack{k \in h^{-1}A \\ k \neq h^{-1}}} \varepsilon(k) \cdot \prod_{k \in Y_h \setminus h^{-1}A} (\pi(d(h))) - \varepsilon(k) \\
        & = \pi(gh)\varepsilon(h^{-1}) \cdot \prod_{\substack{k \in h^{-1}A \\ k \neq h^{-1}}} \varepsilon(k) \cdot \prod_{k \in Y_h \setminus h^{-1}A} (\pi(d(h))) - \varepsilon(k) \\
        & = \pi(gh) \cdot \prod_{k \in h^{-1}A} \varepsilon(k) \cdot \prod_{k \in Y_h \setminus h^{-1}A} (\pi(d(h))) - \varepsilon(k) \\
        & = \tilde{\pi}(h^{-1}A,gh) = \tilde{\pi}(B,gh) = \tilde{\pi}((A,g) \cdot (B,h)).
    \end{align*}

Therefore, in all cases, we have $\tilde{\pi}(A,g)\tilde{\pi}(B,h) = \tilde{\pi}((A,g) \cdot (B,h))$. This shows that extending $\tilde{\pi}$ linearly from $\G^{BR}$ to $K\G^{BR}$ we obtain a homomorphism of $K\G^{BR}$ in $\cA$.

    Now recall that $1_{K\G^{BR}} = \sum_{e \in \G_0} \sum_{A \ni e} (A,e)$. Then
    \begin{align*}
        \tilde{\pi}(1_{K\G^{BR}}) & = \tilde{\pi}\left ( \sum_{e \in \G_0} \sum_{A \ni e} (A,e) \right ) = \sum_{e \in \G_0} \sum_{A \ni e} \tilde{\pi}(A,e) \\
        & = \sum_{e \in \G_0} \sum_{A \ni e} \pi(e) \cdot \prod_{A \ni g} \varepsilon(g) \cdot \prod_{g \in Y_e \setminus A} (\pi(e) - \varepsilon(g)) \\
        & = \sum_{e \in \G_0} \sum_{A \ni e} \prod_{A \ni g} \varepsilon(g) \cdot \prod_{g \in Y_e \setminus A} (\pi(e) - \varepsilon(g)) \\
        & \overset{\eqref{defps}}{=} \sum_{e \in \G_0} \sum_{A \ni e} P_A = \sum_{e \in \G_0} \sum_{A \subseteq X_e} P_A \overset{\eqref{ps}}{=} \sum_{e \in \G_0} \pi(e) = 1_{\cA}.
    \end{align*}

    Moreover,
    \begin{align*}
        \tilde{\pi} \circ \lambda(g) & = \tilde{\pi} \left ( \sum_{A \ni g^{-1}} (A,g) \right ) = \sum_{A \ni g^{-1}} \tilde{\pi}(A,g) = \pi(g) \cdot  \sum_{A \ni g^{-1}} \sum_{h \in A} \varepsilon(h) \cdot \prod_{h \in Y_g \setminus A}(\pi(d(g)) - \varepsilon(h))
        \end{align*}
        \begin{align*}
        & = \pi(g)\varepsilon(g^{-1})\cdot  \sum_{A \ni g^{-1}} \sum_{\substack{h \in A \\ h \neq g^{-1}}} \varepsilon(h) \cdot \prod_{h \in Y_g \setminus A}(\pi(d(g)) - \varepsilon(h)) \\
        & = \pi(g) \cdot  \sum_{A \ni g^{-1}} \sum_{\substack{h \in A \\ h \neq g^{-1}}} \varepsilon(h) \cdot \prod_{h \in Y_g \setminus A}(\pi(d(g)) - \varepsilon(h)) \\
        & = \pi(g) \cdot  \sum_{A \ni g^{-1}} \pi(d(g)) \sum_{\substack{h \in A \\ h \neq g^{-1}}} \varepsilon(h) \cdot \prod_{h \in Y_g \setminus A}(\pi(d(g)) - \varepsilon(h)) \\
        & = \pi(g) \cdot  \sum_{A \ni g^{-1}} (\varepsilon(g^{-1}) + \pi(d(g)) - \varepsilon(g^{-1})) \sum_{\substack{h \in A \\ h \neq g^{-1}}} \varepsilon(h) \cdot \prod_{h \in Y_g \setminus A}(\pi(d(g)) - \varepsilon(h)) \\
        & = \pi(g) \cdot \sum_{B}\prod_{h \in B}\varepsilon(h) \cdot \prod_{h \in Y_g \setminus B} (\pi(d(g)) - \varepsilon(h)) \\
        & = \pi(g)\tilde{\pi}\left (\sum_{B}(B,d(g)) \right ) = \pi(g)\tilde{\pi}(d(g)) = \pi(g) \cdot \left ( \sum_{e \in \G_0} \tilde{\pi}(e) \right ) = \pi(g)1_{K\G^{BR}} = \pi(g).
    \end{align*}

    Now it only remains to show the uniqueness of the homomorphism $\tilde{\pi}$. To this claim, we shall show that $\lambda(\G)$ generates $K\G^{BR}$.

    Let $(B,h) \in \G^{BR}$, where $B = \{b_1^{-1}, b_2^{-1}, \ldots, b_{k-1}^{-1}, h^{-1}\}$ is a subset of $Y_h$ containing the $d(h)$. The set of such pairs forms a vector space basis for $K\G^{BR}$. Let us denote by $\mathfrak{A}$ the subalgebra of $K\G^{BR}$ generated by $\lambda(\G)$. Let $\{g_1, \ldots, g_k\} \subseteq \G$ be such that
    \begin{align*}
        g_1 = b_1, \quad g_1g_2 = b_2, \quad g_1g_2g_3 = b_3, \quad \ldots \quad g_{1}g_{2} \cdots g_{k-1} & = b_{k-1}, \quad g_1 \cdots g_k = h.
    \end{align*}

    These elements are well-defined. In fact, from $g_1 = b_1$, $d(b_1^{-1}) = r(b_1) = d(h) = r(b_2)$ and $g_1g_2 = b_2$ one obtain $g_2 = b_{1}^{-1}b_2$. Inductively, we obtain $g_i = b_{i-1}^{-1}b_i$, for all $2 \leq i \leq k-1$ and $g_k = b_{k-1}^{-1}h$. 

    Consider the element
    \begin{align*}
        \lambda(g_1) \cdots \lambda(g_k) & = \sum_{\substack{A_1 \ni g_1^{-1} \\ \vdots \\ A_k \ni g_k^{-1}}} (A_1,g_1) \cdots (A_k,g_k) = \sum_{\substack{A_k \ni g_k^{-1} \\ g_{k}A_k \ni g_{k-1}^{-1} \\ \vdots \\ g_2 \cdots g_kA_k \ni g_1^{-1}}} (A_1, g_1 \cdots g_k) = \sum_{A \supseteq B} (A,h).
    \end{align*}

    Thus, for all $(B,h) \in \G^{BR}$, $\sum_{A \supseteq B} (A,h) \in \mathfrak{A}$. Suppose that $Y_g \setminus B = \{ x_1, x_2, \ldots, x_n \}$. We have that
    \begin{align*}
        \sum_{A \supseteq B} (A,h) - \sum_{A \supseteq B \cup \{x_1 \}} (A,h) = \sum_{\substack{A \supseteq B \\ A \not\ni x_1}} (A,h),
    \end{align*}
    from where it follows inductively that
    \begin{align*}
        (B,h) = \sum_{\substack{A \supseteq B \\ A \not\ni x_1, \ldots, x_{n-1}}} (A,h) - \sum_{\substack{A \supseteq B \\ A \not\ni x_1, \ldots, x_{n}}} (A,h) \in \mathfrak{A},
    \end{align*}
    ending the proof.
\end{proof}

\begin{corollary} \label{coliso}
The groupoid algebra $K\G^{BR}$ is isomorphic to the partial groupoid algebra $K_{\text{\emph{par}}}(\G)$.
\end{corollary}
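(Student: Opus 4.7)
The plan is to apply the two universal properties in tandem: Theorem \ref{teo1} characterizes $K_{\text{par}}(\G)$ by the property that its $K$-algebra homomorphisms into a unital $K$-algebra $\cA$ are in natural bijection with partial representations $\G \to \cA$, while Theorem \ref{teorep} establishes that $K\G^{BR}$ enjoys the same universal property via the partial representation $\lambda$. Since both algebras corepresent the same functor on unital $K$-algebras, they must be canonically isomorphic, and the standard abstract-nonsense argument produces mutually inverse $K$-algebra homomorphisms between them.

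Concretely, I would first observe that the tautological assignment $g \mapsto [g]$ is itself a partial representation of $\G$ on $K_{\text{par}}(\G)$. Axioms (i) and (ii) of Definition \ref{defparrep} are literal restatements of the defining relations (i) and (ii) of $K_{\text{par}}(\G)$; axiom (iii), namely $[g][g^{-1}][g] = [g]$, follows by applying relation (ii) with $h = g^{-1}$ to get $[g][g^{-1}][g] = [gg^{-1}][g] = [r(g)][g]$ and then using relation (iii); axiom (iv) is precisely the orthogonal unit decomposition $\sum_{e \in \G_0}[e] = 1_{K_{\text{par}}(\G)}$ noted right after the definition of $K_{\text{par}}(\G)$, together with $[e][f]=0$ for $e \neq f$ in $\G_0$, which is relation (iv) of the presentation.

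Next I would apply Theorem \ref{teorep} to this partial representation to obtain a unique homomorphism $\psi : K\G^{BR} \to K_{\text{par}}(\G)$ with $\psi(\lambda(g)) = [g]$, and Theorem \ref{teo1} to the partial representation $\lambda : \G \to K\G^{BR}$ supplied by the preceding lemma to obtain a unique homomorphism $\phi : K_{\text{par}}(\G) \to K\G^{BR}$ with $\phi([g]) = \lambda(g)$.

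Finally, I would conclude that $\phi$ and $\psi$ are mutually inverse by invoking the uniqueness clauses. The composite $\psi \circ \phi$ sends every generator $[g]$ to $\psi(\lambda(g)) = [g]$, exactly as the identity does, so the uniqueness part of Theorem \ref{teo1} applied to the partial representation $g \mapsto [g]$ forces $\psi \circ \phi = \mathrm{id}_{K_{\text{par}}(\G)}$. Dually, $(\phi \circ \psi) \circ \lambda = \phi \circ (g \mapsto [g]) = \lambda$, so the uniqueness part of Theorem \ref{teorep} applied to $\lambda$ itself forces $\phi \circ \psi = \mathrm{id}_{K\G^{BR}}$. I do not anticipate a serious obstacle: once the map $g \mapsto [g]$ is recognized as a partial representation, everything else is a routine invocation of the two universal properties, and the only point requiring any care is to apply each uniqueness clause in the correct direction.
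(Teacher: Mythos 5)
Your proposal is correct and follows essentially the same route as the paper: both recognize $g \mapsto [g]$ and $\lambda$ as partial representations, apply Theorems \ref{teo1} and \ref{teorep} to obtain the two homomorphisms, and conclude via uniqueness that they are mutually inverse. Your extra detail (verifying the axioms for $g \mapsto [g]$ and spelling out which uniqueness clause is used in each direction) only fills in steps the paper leaves as "easy to check."
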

\begin{proof}
    The maps $[ \,\,] : \G \to K_{\text{par}}(\G)$, $g \mapsto [g]$, and $\lambda : \G \to K\G^{BR}$ are partial representations of $\G$. By Theorems \ref{teo1} and \ref{teorep}, there exist $K$-algebra homomorphisms $\tilde{\pi} : K\G^{BR} \to K_{\text{par}}(\G)$ and $\phi : K_{\text{par}}(\G) \to K\G^{BR}$ such that $\tilde{\pi}(\lambda(g)) = [g]$ and $\phi([g]) = \lambda(g)$. 

    It is easy to check that $\tilde{\pi}$ and $\phi$ are each others inverses, since they are the identity on the generators; hence $\tilde{\pi}$ and $\phi$ are isomorphisms. %The result is now clear since $\Gamma(\G) \simeq (\G^{BR})^{op}$ by Proposition \ref{propbrop}.
\end{proof}

\begin{obs}
The structure of $K_{\text{par}}(\G)$ regarding to Bernoulli partial actions was extensively studied in \cite{velasco2021algebras}, and several examples were presented.
\end{obs}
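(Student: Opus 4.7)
The final statement is a Remark, not a theorem, lemma, proposition, or claim, and its content is purely bibliographic: it records that $K_{\text{par}}(\G)$, viewed through the lens of Bernoulli-type partial actions, has been investigated in \cite{velasco2021algebras}, where examples are exhibited. There is therefore no mathematical assertion to prove here. The remark is \emph{verified} by consulting the cited reference; it is not \emph{derived} from the preceding results by any chain of deductions. In particular, there is no target identity to establish, no universal property to check, and no object to construct.

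If one wished to upgrade this editorial pointer into a substantive statement admitting a proof, the natural formulation would be something like: \emph{every structural result obtained in \cite{velasco2021algebras} concerning Bernoulli partial actions on $K_{\text{par}}(G)$ (for $G$ a group) extends to the groupoid partial algebra $K_{\text{par}}(\G)$ via the isomorphism $K_{\text{par}}(\G) \cong K\G^{BR}$ of Corollary \ref{coliso}}. The plan for such an upgrade would be, first, to import from \cite{velasco2021algebras} the precise definition of a Bernoulli partial action and its induced module/idempotent decomposition of $K_{\text{par}}(G)$; second, to adapt those definitions to the groupoid case, taking advantage of the connected-component decomposition $K_{\text{par}}(\G) \simeq K_{\text{par}}(\G_1) \oplus \cdots \oplus K_{\text{par}}(\G_n)$ noted in the example after Theorem~\ref{teo1}; and third, to transport the structural statements along the isomorphism with $K\G^{BR}$.

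Under such a reformulation, the principal obstacle would be the adaptation step: the group-case constructions in \cite{velasco2021algebras} typically exploit a transitive action of $G$ on the subsets of $G$ (the Bernoulli combinatorics), whereas a finite groupoid need not be connected, and the role of the identity element is played by the whole of $\G_0$. One would need to verify that the analogous combinatorics on the sets $X_e = \{h \in \G : r(h) = e\}$, used already in the definition of $P_S$ and in the proof of Theorem~\ref{teorep}, yields an equally tractable decomposition. But since the remark as stated is only a citation, these considerations lie outside its scope; no proof is called for.
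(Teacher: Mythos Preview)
Your assessment is correct: the statement is a bibliographic remark with no mathematical content to prove, and the paper itself offers no proof for it. Your first paragraph already says everything that needs to be said; the subsequent speculation about how one might upgrade the remark into a provable claim is harmless but unnecessary.
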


\renewcommand{\bibname}{Bibliography}

\end{document}